\theoremstyle{plain}
\newtheorem{theorem}{Theorem}[section]
\newtheorem{lemma}[theorem]{Lemma}
\newtheorem{example}[theorem]{Example}
\theoremstyle{definition}
\newtheorem{definition}[theorem]{Definition}
\newtheorem{notation}[theorem]{Notation}
\theoremstyle{remark}
\newtheorem*{claim}{Claim}
\newtheorem{con}[theorem]{Construction}
\newtheorem{remark}[theorem]{Remark}
\DeclareMathOperator{\mult}{mult}
\DeclareMathOperator{\rank}{Rank}
\newcommand{\QED}{\ifhmode\unskip\nobreak\fi\quad {\rm Q.E.D.}} 
\newcommand\Span[1]{\langle{#1}\rangle}
\newcommand\iso{\cong}
\newcommand{\C}{\mathbb{C}}
\newcommand{\F}{\mathbb{F}}
\newcommand{\h}{\mathcal{H}}
\newcommand{\I}{\mathcal{I}}
\renewcommand{\L}{\mathcal{L}}
\renewcommand{\O}{\mathcal{O}}
\renewcommand{\o}{\mathcal{O}}
\renewcommand{\P}{\mathbb{P}}
\newcommand{\p}{\mathbb{P}}
\newcommand{\rat}{\dasharrow}
\begin{document}
\title{Cremona equivalence and log Kodaira dimension}

\author{
Massimiliano Mella}
\address{
Dipartimento di Matematica\ e Informatica\ 
Universit\`a di Ferrara\\
Via Machiavelli 30\\
44121 Ferrara Italia}
\email{mll@unife.it}

\date{September 2025}
\subjclass{Primary 14E07 ; Secondary 14E05, 14J30, 14E30}
\keywords{Cremona transformation, singularities, birational maps,
  log-Kodaira dimension}
\thanks{The author is a member of GNSAGA and has been supported by
  PRIN 2022 project  2022ZRRL4C
  Multilinear Algebraic Geometry.
finanziato dall'Unione Europea - Next Generation EU}
\maketitle
\begin{abstract}
  Two projective varieties are said to be Cremona equivalent if there
  is a Cremona modification sending one onto the other. In the last
  decade, Cremona equivalence has been investigated widely, and we now have a complete theory for non-divisorial reduced schemes. The case
  of irreducible divisors is completely different, and not much is
  known besides the case of plane curves and a few classes of
  surfaces. In particular, for plane curves it is a classical result
  that an irreducible  plane curve is Cremona equivalent to a line if
  and only if its log-Kodaira dimension is negative.  This can be
  interpreted as the log version of Castelnuovo's rationality criterion
  for surfaces. One expects that a similar result for surfaces in
  projective space should not be true, as it is false, the
  generalization in higher dimensions of Castelnuovo's Rationality Theorem.  
In this paper, the first example of such behaviour is provided, exhibiting a rational
surface in the projective space with negative log-Kodaira dimension, which is not Cremona equivalent to a plane. This can be thought of as a sort of log Iskovkikh-Manin, Clemens-Griffith, Artin-Mumford
example. Using this example, it is then possible to show that Cremona
equivalence to a plane is neither open nor closed among log pairs with
negative Kodaira dimension.
\end{abstract}

\section{Introduction}
Let $X,Y\subset \p^N$ be irreducible birational subvarieties. It is
quite natural to ask if there is a birational self-map of the
projective space $\omega:\p^N\dasharrow\p^N$ such that
$\omega(X)=Y$. If this is the case, $X$ is said to be Cremona
equivalent to $Y$.

The notion of Cremona equivalence is old and already at the end
of the $\rm XIX^{th}$ century, both the Italian and the English school of algebraic
geometry approached the problem, with special regard to the plane
curves.

In this context, a lot of attention was devoted to rational curves
Cremona equivalent to a line. It is easy to give examples of rational
curves that are not Cremona equivalent to a line, for instance, a nodal sextic. It is less immediate to
characterize rational curves that are Cremona equivalent to a line. I am not going to explain the long and intricate story of this theorem
and its proofs. Let me simply say that the key idea is that the vanishing of all the adjoints, in
modern language, the negativity of the log Kodaira dimension, is enough to
provide singularities that force the Cremona equivalence of a rational curve to a line.
\begin{theorem}\label{thm:C}
 \cite[vol III pg 188]{CE15} \cite[pg 406]{Co59} A rational curve $C\subset\p^2$ is Cremona equivalent to
  a line if and only if all adjoint linear systems to $C$ vanish,
  i.e. $\overline{\kappa}(\p^2,C)<0$.
\end{theorem}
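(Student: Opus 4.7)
\proofname\ sketch.

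The direct implication is immediate from Cremona invariance: if $\omega:\p^2\rat\p^2$ sends $C$ onto a line $L$, then $\Ka(\p^2,C)=\Ka(\p^2,L)=\Ka(\mathbb{A}^2)=-\infty$.

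For the converse, assume $C$ is rational with $\Ka(\p^2,C)<0$ and argue by induction on $d=\deg C$. The base case $d=1$ is trivial. For the inductive step one seeks a Cremona map $\omega$ with $\deg\omega(C)<d$: since both rationality and the sign of the log Kodaira dimension of the pair are Cremona invariants, $\omega(C)$ will satisfy the inductive hypothesis, and the induction eventually lands on a line.

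The tool producing such an $\omega$ is a log version of Max Noether's inequality: under the standing assumption, for $d\geq 2$ there exist three (possibly infinitely near) singular points $p_1,p_2,p_3$ on $C$ with multiplicities $m_i=\mult_{p_i}(C)$ satisfying $m_1+m_2+m_3>d$. Granted this, the quadratic Cremona centered at the cluster $\{p_1,p_2,p_3\}$ sends $C$ to a curve of degree $2d-(m_1+m_2+m_3)<d$, closing the induction (with a \dJen transformation used instead when the cluster is too degenerate to support a quadratic map).

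The real difficulty is the Noether-type inequality itself. Two closely related strategies are available. The first reads the hypothesis as the vanishing $h^0\bigl(k(K_{\tilde X}+\tilde C+E)\bigr)=0$ for all $k\geq 1$ on a log resolution $\pi:\tilde X\to\p^2$ with reduced exceptional divisor $E$; combining these vanishings with the genus formula $\binom{d-1}{2}=\sum\binom{m_i}{2}$ for the rational curve $C$ translates into numerical inequalities on the weighted cluster of singularities, from which the existence of three points with large combined multiplicity is extracted by a careful combinatorial argument. A more geometric route invokes the Miyanishi--Sugie structure theorem: $\Ka(\p^2\setminus C)=-\infty$ forces an $\mathbb{A}^1$-fibration on $\p^2\setminus C$, equivalently a pencil of rational curves on $\p^2$ each meeting $C$ transversally at a single smooth point, and a B\'ezout computation on this pencil concentrates its intersection with $C$ at the singularities and yields the required multiplicity inequality. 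Either way, the subtle point, and the main obstacle, is the bookkeeping of infinitely near points and their proximity relations, which is where the genuine content of the theorem lies.
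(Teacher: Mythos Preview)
The paper does not supply its own proof of this statement: Theorem~\ref{thm:C} is quoted as a classical result with references to Enriques--Chisini and Coolidge, and the author explicitly writes ``I am not going to explain the long and intricate story of this theorem and its proofs.'' The only indication the paper gives of the argument is the one-line summary that the vanishing of adjoints ``is enough to provide singularities that force the Cremona equivalence of a rational curve to a line,'' together with the remark that Kumar--Murthy \cite{KM82} showed it suffices to check the second adjoint. So there is nothing to compare your sketch against beyond that sentence.

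That said, your outline is exactly in the spirit of that sentence and is a faithful summary of the classical route: the easy direction by birational invariance of $\Ka$, and the hard direction by producing, from the vanishing of adjoints, a cluster of (possibly infinitely near) points of large combined multiplicity so that a quadratic or \dJen transformation strictly lowers the degree, then inducting. Your two suggested mechanisms for the Noether-type inequality (numerics from the genus formula plus adjoint vanishing, or the Miyanishi--Sugie $\mathbb{A}^1$-fibration) are both standard and appear in the literature around \cite{KM82}.

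One small point of alignment with the paper: in your first strategy you write the vanishing as $h^0\bigl(k(K_{\tilde X}+\tilde C+E)\bigr)=0$, i.e.\ the Iitaka log Kodaira dimension of the open complement. The paper's Definition~\ref{def:logK}, however, sets $\Ka(\p^2,C)=\kappa(Z,C_Z)$ with only the strict transform, no reduced exceptional boundary. For the purposes of Theorem~\ref{thm:C} the two conditions lead to the same conclusion (and the paper's remark after Definition~\ref{def:logK} phrases things in terms of vanishing of all adjoint systems, which is the classical formulation), but if you intend your sketch to slot into this paper you should match the convention used here.
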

Where $\overline{\kappa}(\p^2,C)$ is the log Kodaira dimension of the
pair $(\p^2,C)$, see the next section for a precise definition.
This result can be seen as a log version of {Cas\-tel\-nuo\-vo}
rationality criterion, saying that to detect the Cremona equivalence to
a line it is enough to check the vanishing of some prescribed
cohomological groups. Note that \cite{KM82} improved Coolidge result and
proved that it is enough to check the vanishing of the second adjoint,
increasing the similarity  to the Castelnuovo criterion.

My aim is to investigate the higher-dimensional version of Theorem~\ref{thm:C}. First, one should ask for the Cremona equivalence of arbitrary
rational subvarieties of $\p^n$. It is amazing, but as soon as
the codimension is at least 2, any birational equivalence  of reduced schemes
can be obtained via a Cremona modification. This is the content of a
series of papers I dedicated to the subject, \cite{MP09} \cite{CCMRZ16}
\cite{Me22}. For divisors, the situation is much more intricate and
only very special examples of Cremona equivalence are known, \cite{MP09} \cite{Me13}\cite{CC17}\cite{CC18} \cite{Me20} \cite{Me21}.

Like the failure of a higher-dimensional version of Castelnuovo
rationality criterion, proved in a series of seminal papers \cite{IM71} \cite{CG72} \cite{AM72}, the existence of rational surfaces with negative
log Kodaira dimension that are not Cremona equivalent to a plane in
$\P^3$ was largely expected. On the other hand, since no known birational invariant can distinguish those pairs from a plane in
$\p^3$, no examples were known.

The aim of this paper is to provide the first example of such
a behavior. Let $W\subset\p^7$ be the minimal degree embedding of
$\F_0$, that is the embedding given by the linear system $\o_{\F_0}(1,3)$.
\begin{theorem}\label{thm:main}
  Let $S\subset\p^3$ be a general linear projection of $W$. Then
  $\overline{\kappa}(S,\p^3)<0$ and $S$ is not Cremona equivalent to a plane.
\end{theorem}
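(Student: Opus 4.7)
The proof splits into two parts, both relying on the same normalization computation. Let $\nu \colon W = \F_0 \to S$ denote the normalization. Since $S$ is obtained from $W$ by a linear projection of the embedding $|\mathcal{O}(1,3)|$, one has $\nu^{*}\mathcal{O}_S(1) = \mathcal{O}_W(1,3)$. The conductor formula $K_W + \mathcal{C} = \nu^{*}K_S = \nu^{*}\mathcal{O}_S(d-4) = \mathcal{O}(2,6)$, combined with $K_W = \mathcal{O}(-2,-2)$, determines the class of the conductor $\mathcal{C} \sim (4,8)$ on $W$.

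For $\overline{\kappa}(S,\p^3) < 0$, the plan is to show that every pluri-adjoint linear system of the pair is empty. The $m$-th pluri-adjoint is $H^0(\p^3, \mathcal{I}_C^m(2m))$, the space of surfaces of degree $2m$ vanishing to order at least $m$ along the double curve $C$ of $S$. Pulling back to $W$, any such section lands inside $H^0(W,\nu^{*}\mathcal{O}(2m)\otimes \mathcal{O}(-m\mathcal{C})) = H^0(\mathcal{O}(-2m,-2m)) = 0$ for every $m\geq 1$. The kernel of the pullback map consists of sections containing $S$ as a component; dividing by the defining polynomial of $S$ turns such a section into an element of $H^0(\p^3,\mathcal{I}_C^{m-2}(2m-6))$, an adjoint-like system of strictly smaller degree and multiplicity. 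A short induction on the degree then forces every pluri-adjoint to be empty, yielding $\overline{\kappa} = -\infty$.

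For the second part, that $S$ is not Cremona equivalent to a plane, the invariant $\overline{\kappa}$ cannot decide the question (a plane has the same value), and a finer birational invariant of the log pair is needed. My plan is to argue by contradiction in the spirit of birational rigidity. Suppose $\omega \in \Bir(\p^3)$ sends $S$ onto a plane $H$. The geometric content to exploit is the scroll structure of $W$: the two rulings of $\F_0$ — by lines and by twisted cubics — project to two distinguished one-parameter families of rational curves on $S$, and the specific numerical class $(4,8)$ of $\mathcal{C}$ records exactly how $W$ is embedded in $\p^3$. I would analyze $\omega$ via a Sarkisov-type factorization compatible with $(\p^3, S)$, blowing up the double curve $C$ and tracking the admissible Mori contractions on the resulting model, aiming to show that no link in such a factorization can straighten $S$ to a plane.

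The main obstacle lies precisely in this rigidity step. Because the theorem announces the first example of its kind in dimension three for divisors, there is no ready template: the standard birational rigidity arguments exploit negativity of the canonical class of the ambient variety, while here the ambient $\p^3$ is fixed and the obstruction must come entirely from the log boundary and from the scrollar embedding of the normalization. I expect the decisive ingredient to be a rigidity statement for birational maps $\F_0 \dashrightarrow \p^2$ that extend to a Cremona transformation compatible with the conductor class $(4,8)$, with the adjoint numerology from the first part feeding directly into the argument.
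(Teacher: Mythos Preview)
Your treatment of $\overline{\kappa}(\p^3,S)<0$ is correct and essentially the paper's argument in different clothing. The paper blows up $\Gamma$ and observes $K_T+S_T=\nu^*\o_{\p^3}(2)-E_\Gamma$, then rules out degree-$d$ surfaces with $\mult_\Gamma\geq d/2$ by intersecting with $S$ and using $\deg\Gamma=10$. Your pullback to $W$ via the conductor $(4,8)$ is the same intersection computation read on the normalization, and your induction to peel off copies of $S$ is exactly the reduction to $D\not\supset S$ that the paper makes in one line.

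The second half, however, is not a proof but an outline, and the direction you point toward is the one the paper explicitly abandons. The paper carries out the $2$-ray game starting from the blow-up of $\Gamma$: the second ray is spanned by the ruling $f_2$ with $S_T\cdot f_2=-2$ and $K_T\cdot f_2=0$, so its contraction produces a Fano $3$-fold with a curve of \emph{canonical} singularities. This is precisely where a Sarkisov factorization breaks down, and the paper stresses that this failure is only a hint, not a proof.

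The actual argument is of a different nature and hinges on two ideas you have not found. First, an extension lemma: every plane Cremona map $\omega:\p^2\rat\p^2$ extends to some $\Omega\in\Bir(\p^3)$ sending a plane to a plane (Noether--Castelnuovo plus the quadro-quadric transformation). This lets one precompose a hypothetical $\chi$ so that $(\chi^{-1})_{|H}$ is a \emph{fixed} map $\psi:\p^2\rat S$, namely $\psi=\varphi_\Lambda$ for $\Lambda\subset|\I_{q_1^3\cup q_2}(4)|$. Second, with the restriction pinned down, the strict transform $S_Z$ on a common resolution has Picard group of rank exactly $3$, generated by the two $(-1)$-curves $F_1,F_2$ over $p_1\in S_T$ and the $(-1)$-curve $M$ over the line $\overline{q_1q_2}$ in $H$. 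Writing $p^*S_{|S_Z}$ and $q^*H_{|S_Z}$ in these coordinates, using $E_{\Gamma|S_T}\sim(4,8)$, produces a linear system in non-negative integers with no solution. The contradiction is pure intersection bookkeeping on the surface; no link structure on the ambient $3$-fold is used.

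Your closing guess, that the obstruction should be a rigidity statement for birational maps $\F_0\rat\p^2$ compatible with the class $(4,8)$, is morally on target, but the missing step is exactly how to reduce to a \emph{single} such map. Without the extension lemma you would have to control an arbitrary restricted Cremona map, and the rank of $\mathrm{Pic}(S_Z)$ would be unbounded.
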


Let me spend some words on the reason I focused on this special
surface. Any rational surface of degree at most $4$ is Cremona
equivalent to a plane, \cite{Me20}. All rational quintic surfaces I was able to
test are Cremona equivalent to a plane. Unfortunately, the
classification of rational quintic surfaces is incomplete, so I do not have a full statement in this degree. All but $S$, sextic
rational surfaces I studied are Cremona equivalent to a
plane.
  
The surface $S$ can be detected with a 2-ray
game inherited by the Sarkisov program.
My heuristic approach is based on the following pattern. Produce
Sarkisov links out of the singularities of the surface and then
perform a 2-ray game.  In the context of Cremona
equivalence, the linear system that provides the map  is hidden, the starting surface is usually an irreducible component of a very special
divisor in the linear system that realizes the equivalence.
For this reason, this approach alone can only provide positive
answers in very special situations. 
Nonetheless, the surface $S$ is the first case of my personal zoo, where this method
failed to produce a non-terminal 3-fold as output. That is, the 2-ray game led me to a non-terminal variety. I
interpreted this failure as a suggestion to go deeper into the geometry
of the pair.

Once the attention is focused on this example, a brute force analysis of  the intersection theoretic behavior of a possible Cremona equivalence between $S$ and a plane allows us to prove
Theorem~\ref{thm:main}. This, again, reminds me of the first proof of the non-rationality of quartic 3-folds, \cite{IM71}. Unfortunately, at the
moment, a more conceptual approach is not at hand, and this lack
prevents a more general treatment of Cremona equivalence of surfaces with negative Kodaira
dimension.

The example also allows us to prove that being Cremona
equivalent is neither open nor closed, even for families of pairs with negative log Kodaira
dimension, see
Theorems~\ref{th:noopen}~and~\ref{th:noclose} for the precise statements.

The paper is constructed as follows. In Section 2, general results about  Cremona equivalence are
proposed; in particular, Lemma~\ref{lem:blowup} provides a very useful
log resolution of the pair $(\p^3,S)$. The section ends with an
application of the 2-ray game to a different projected surface
Cremona equivalent to a plane.  This construction of Cremona
equivalence, even if not strictly necessary, helps
to understand the geometric reason that makes this special example work.
In Section 3, Theorem~\ref{thm:main} is proved. In the final section, special families of log varieties are studied to prove that  Cremona equivalence to a plane is neither open nor closed
among log pairs of negative log-Kodaira dimension.

\noindent{\sc acknowledgment}\ \   I would like to thank Ciro Ciliberto for his constant support on the subject and
for pushing me to detect this special rational surface. Many thanks
are due to the referee for improving the paper with many suggestions
and for providing references and classical views in various places.

\section{Cremona equivalence: definition and first results}
We work over the complex field.
Let us start introducing the main relation we are going to analyze.
\begin{definition}
  Let $X,Y\subset\p^N$ be two birational subvarieties. We say that $X$ is Cremona equivalent to $Y$ if there is a birational modification
  $\omega:\p^N\dasharrow \p^N$, defined on the generic point of $X$,
  such that $\omega(X)=Y$.
\end{definition}
For a log pair $(\p^n,X)$  we define $\overline{\kappa}(\p^n,X)$ as follows.
\begin{definition}
  \label{def:logK} Let $X\subset\p^n$ be a divisor and let  $\mu:Z\to\p^n$ be a log resolution of the pair, that is, $Z$ is smooth
  and $\mu^*X=X_Z+\Delta$, for some effective $\mu$-exceptional
  divisor $\Delta$, is a normal crossing divisor. We define
  $\overline{\kappa}(\p^n,X)$ to be  the log Kodaira
  dimension of the pair  $(Z,X_Z)$, that is
  $$\overline{\kappa}(\p^n,X):=\kappa(Z,X_Z).$$
\end{definition}
\begin{remark}
  It is well known that Definition~\ref{def:logK} does not
  depend on the log resolution chosen and therefore
  $\overline{\kappa}(\p^n,X)$ is well defined.

  When $\overline{\kappa}(\p^n,X)<0$ all linear systems $m(K_Z+X_Z)$
  are without sections, for $m\geq 1$.
   Note that since $X_Z$ is effective, this is equivalent
  to having
  $$H^0(Z,aK_Z+bX_Z)=0,$$
  for all $a\geq b\geq 1$. This is exactly the vanishing of all
  adjoint linear systems mentioned in Theorem~\ref{thm:C}.
\end{remark}

\begin{definition}
  For a linear system $\L$ on a variety $X$ let $\varphi_\L:X\rat
\L^*$ be the map induced by divisors in $\L$.
\end{definition}

In this paper, we are focused on rational surfaces $S\subset \p^3$ and in
particularly on the generic projection of smooth surfaces to $\p^3$.

\subsection{Cremona equivalence for projected surfaces in $\p^3$}

Let us start by recalling some known results on projected surfaces, see
for instance  \cite{P05}.

Let $f: W\to S\subset\P^3$ be a generic projection of a smooth
surface $W\subset\P^n$. The singular locus of $S$ is a curve $\Gamma$
whose only singularities are  $t$  ordinary triple points with
transverse tangent directions. The surface $S$ has ordinary triple points at the singular
points of $\Gamma$. The  curve $\Gamma$ is a
curve of ordinary double points for $S$ except for a bunch of pinch points. 
Except in the case of generic projections of the Veronese surface,
the curve $\Gamma$ is irreducible. 

The next Lemma is the key to understanding the blow-up of the singular
curve $\Gamma\subset S\subset\p^3$.

\begin{lemma}\label{lem:blowup}
  Let $\Gamma\subset X$ be an irreducible reduced curve in a smooth
  3-fold $X$.  Assume that the only singularities of
  $\Gamma$ are $t$ ordinary triple points. Let $\nu:T\to X$ be the blow up of
  $\Gamma$. Then $T$ is a variety with terminal singularities and the
  only singularities of $T$ are $t$ points of type $1/2(1,-1,1)$ over
  the singularities of $\Gamma$.

  Assume that  $S:=\pi(W)\subset\P^3$ is a general projection of a smooth surface
  $W\subset\P^n$, different from the Veronese surface, and $\Gamma$ its curve of singularities. Let $\nu:T\to\p^3$ be
  the blow up of $\Gamma$ and $S_T$ the strict transform of $S$. Then
  $S_T\cong W$, $\nu_{|S_T}=\pi_{|W}$ is the projection and $S_T$ is on the smooth locus of $T$.
\end{lemma}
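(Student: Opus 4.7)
The proof is entirely local on $X$, and the first thing I would do is dispatch the generic locus: outside the $t$ triple points, $\Gamma$ is smooth, so $\Bl_\Gamma X$ is smooth at the corresponding points by the classical fact that blowing up a smooth subvariety of a smooth ambient yields a smooth variety. The analysis is therefore concentrated at a triple point $p$. I would choose \'etale local coordinates $(x,y,z)$ on $X$ so that $\Gamma$ becomes analytically the union of the three coordinate axes, with reduced ideal $I=(xy,yz,xz)$. The ideal $I$ is perfect of height two with a $3\times 2$ Hilbert--Burch syzygy matrix whose maximal minors recover the generators, and it satisfies the $F_1$ Fitting condition, hence is of linear type. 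Consequently $\Bl_I\mathbb{A}^3\subset\mathbb{A}^3\times\P^2$ is cut out scheme-theoretically by the two linear syzygies. A chart-by-chart computation exhibits the exceptional fiber over $p$ as a copy of $\P^2$ together with an isolated terminal 3-fold singularity over $p$; I would identify it with $\frac{1}{2}(1,-1,1)$ by matching against the Reid--Mori list of cyclic quotient terminal singularities.

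For the second statement, $S_T$ is by definition the closure in $T$ of $\nu^{-1}(S\setminus\Gamma)$, and outside $\Gamma$ the map $\nu$ is an isomorphism. Along the generic locus of $\Gamma$, $S$ is locally $\{xy=0\}$ with $\Gamma=\{x=y=0\}$, and blowing up $\Gamma$ separates the two sheets into two disjoint smooth surfaces sitting in the smooth locus of $T$. At a pinch point, $S$ is analytically a Whitney umbrella $\{y^2=xz^2\}$, and the local model shows that the strict transform becomes smooth and avoids the singular locus. The real content is at a triple point: here the three sheets of $S$, whose pairwise intersections recover the three local branches of $\Gamma$, lift to three smooth disjoint sheets, and in the same coordinates as Part~1 I would verify that each sheet meets the exceptional $\P^2$ at a smooth point of $T$, so that $S_T$ avoids the $\frac{1}{2}(1,-1,1)$ singularity. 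Then $S_T$ is smooth and contained in the smooth locus of $T$, and $\nu|_{S_T}\colon S_T\to S$ is a finite birational morphism from a smooth surface, hence the normalization of $S$; by the standard description of a general projection (cf.\ \cite{P05}) this identifies $S_T$ with $W$ and $\nu|_{S_T}$ with $\pi|_W$.

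The hard part will be the triple-point analysis, which powers both halves of the lemma. I would have to compute the Rees algebra of $I=(xy,yz,xz)$ precisely enough to locate the singular point in the exceptional $\P^2$ and identify it as $\frac{1}{2}(1,-1,1)$, and then use the same local model to track where the three branches of $S$ lift; ensuring that the lift lands in the smooth locus is the key geometric input distinguishing this ``good'' blow-up from what one would get for a less symmetric configuration. These local computations are small but delicate and look unavoidable.
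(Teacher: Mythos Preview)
Your direct Rees-algebra computation at a triple point will not produce what you claim. With $I=(xy,yz,xz)$ the reduced ideal of the three coordinate axes, $I$ is indeed of linear type and the chart $T_0\neq 0$ of $\Bl_I\mathbb{A}^3$ is $\{yu_1-zu_2=0\}\subset\mathbb{A}^4$, an \emph{ordinary double point}; by symmetry you obtain three ODPs, one at each coordinate vertex of the exceptional $\P^2$, not a single $\tfrac12(1,-1,1)$ point. Thus $\Bl_I\mathbb{A}^3$ is terminal but not $\Q$-factorial, and the $\P^2$ over $p$ appears as an \emph{extra} irreducible Weil component of the exceptional locus (equivalently $xyz\in I^{(2)}\setminus I^2$, so the ordinary and symbolic Rees algebras differ). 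Worse, the three local sheets of $S$ do not avoid the singularities: the strict transform of $\{x=0\}$ in that chart is the plane $\{u_1=u_2=0\}$, which passes straight through the ODP at the origin. So neither the singularity identification nor the ``$S_T$ lies in the smooth locus'' step survives the computation you outline.

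The paper's $T$ is not $\Bl_{\I_\Gamma}X$ but the $\Q$-factorial terminal extraction of the valuation $\mathrm{ord}_\Gamma$ (equivalently $\mathrm{Proj}\bigoplus_n\I_\Gamma^{(n)}$), and the proof builds it indirectly: blow up the triple point first, then the now-smooth curve, flop the three $(-1,-1)$ lines $r_i\subset E_p$, and contract the resulting $\P^2$ with normal bundle $\O(-2)$ to the $\tfrac12(1,-1,1)$ point. Tracking $S$ through this sequence, rather than through the naive blow-up, is exactly what shows $S_T$ misses the singular point. Your plan can be repaired by blowing up the symbolic powers instead of the ordinary ones, or by taking a small $\Q$-factorialisation of $\Bl_{\I_\Gamma}X$ afterwards, but either route reintroduces the flop and is no longer the elementary chart-by-chart verification you propose.
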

\begin{proof} Let $p\in \Gamma$ be a triple point and $F_p$ the
  corresponding fiber in $T$. It is clear that it is enough to prove that $T$ is
  terminal with the required singularities in a neighborhood of $F_p$.

  We can therefore assume, without loss of generality, that the
  curve $\Gamma$ has 
  a unique triple point, say $p$.
 Consider the blow up of the point $p$, say $\nu_p:X_p\to X$,
  with exceptional divisor $E_p$ and
  then the blow up of the curve $\Gamma$, $\nu_{\Gamma}:X_{\Gamma}\to X_p$,
  with exceptional divisor $E_\Gamma$.

  By hypothesis $\nu_\Gamma$ restrict to $E_p$ as the blow up of 3
  not aligned points. Let $r_i$ be the lines joining the 3
  points. Then we have
  $$K_{X_\Gamma}\cdot r_i=0\ \ {\rm and}\ \
  N_{r_i/X_\Gamma}\cong\o(-1)\oplus\o(-1).$$
 Therefore, we may flop these
 lines with a map  $\psi:X_\Gamma\rat Y$.

 By construction, $\psi_{|E_p}$ is
  the contraction of the three lines. In particular,  the composition
  $(\psi\circ\nu_\Gamma)_{|E_p}$ is a standard Cremona transformation
  for the plane $E_p$. Let $E\subset Y$ be the strict transform of $E_p$ then
  $$E_{|E}\sim\O(-2). $$
 This shows that we may blow down $E$ to a singular point of type
 $1/2(1,-1,1)$. Let $\phi:Y\to Y_\Gamma$ be the blow down of $E$.
Note that the construction is relative over $X$. Therefore, there is a
canonical morphism $g:Y_\Gamma\to X$. The
 irreducibility of $\Gamma$ yields
 $$\rank
 Pic(Y_\Gamma/X)=1.$$
 Therefore, $Y_\Gamma$ is the terminal elementary
 extraction of $X$ and $g=\nu:Y_\Gamma\to X$ is the unique blow-up of the curve $\Gamma$. 
 This concludes the first part of the proof.

 Let us summarize all the maps involved in the following diagram

   \[
 \xymatrix{
   &X_{\Gamma}\ar[dl]_{\nu_\Gamma}\ar@{.>}[r]^\psi&Y\ar[d]^\phi  \\
 X_p\ar[d]_{\nu_p}&              &Y_\Gamma\ar[dll]_\nu\\
 X& &}
\]

Let $\pi:W\to S$ be the linear projection. By hypothesis, $W$ is smooth
and $\pi$ is a finite birational map. In particular, $W$ is the
normalization of $S$.
To prove the latter statement in the Lemma, let us follow the birational
modifications on the strict transforms of $S$ along the blow-up
diagram.

The surface $S$ has  triple ordinary points on the singularities of
the curve $\Gamma$ and double points on the smooth points of
$\Gamma$. The latter are ordinary double points except for finitely many
cusps.

Then $S_p\subset
X_p$ is singular along $\Gamma$ and 
$$S_{p|E_p}=r_1+r_2+r_3,$$
being a cubic singular in the three points of intersection $\{r_i\cap
r_j\}_{i\neq j}$.

The
surface 
$S_\Gamma\subset X_\Gamma$ is smooth along $E_p$ and $r_i$ is a local
complete intersection of $E_\Gamma$ and $S_\Gamma$. In particular, the self-intersection of $r_i$ in $S_\Gamma$ is $(-1)$. This shows that
$\psi_{|S_\Gamma}:S_\Gamma\to S_Y$
is the blow down of three $(-1)$-curves and $S_Y\cap
E_p=\emptyset$. Let $S_{Y_\Gamma}=\phi(S_Y)$ be the image of
$S_Y$. Then  the surface $S_{Y_\Gamma}$ is smooth, and it is on
the smooth locus of $Y_\Gamma$.  This shows that 
$\nu_{|S_{Y_\Gamma}}:S_{Y_\Gamma}\to S$ is a finite birational morphism.  In
particular $\nu_{|S_{Y_\Gamma}}$ is the normalization of $S$ and we
conclude 
$$W\cong S$$
and $\nu_{|W}=\pi$.
\end{proof}

\begin{remark}\label{rem:rk2}
  The main feature of Lemma~\ref{lem:blowup} we are going to use is the fact that
  the log variety $(T,S_T)$ has $\rank Pic(T)=2$ and $S_T$ is smooth. This
  drastically simplifies the computations needed to study Cremona Equivalence.
\end{remark}

It is time to shed light on the surface we are looking for.
\begin{notation}\label{notation:W} Let $W\subset\P^7$ be the Segre-Veronese embedding of
  $\F_0$ with the linear system $\o_{\F_0}(1,3)$ and $\pi:W\to S\subset\p^3$ a
  general linear projection of $W$. Then $S$ is a surface of degree $6$
  singular along an irreducible curve  $\Gamma$.

  \begin{remark}
    Note that the class of rational ruled surfaces in $\P^3$ has been
    extensively studied  by Edge, \cite[Chapter IV]{Ed31}, and many of the
    results we are going to use were known to Zeuthen. In particular, the
    one we are considering is a ``general'' sextic ruled
    surface. Besides the results in \cite{Ed31}, one may find much
    information and all the properties we need in Dolgachev's book \cite{Do12}.
  \end{remark}
  The degree of $\Gamma$ can be computed via the sectional genus
 of $W$. Indeed, a general plane section of $S$ is a plane sextic curve of geometric
genus $g(W)=0$ with $\deg\Gamma$ ordinary double points. That is
\begin{equation}
  \label{eq:gradoGammagenus}
  \deg\Gamma={5\choose 2}=10.
\end{equation}
For what follows, it is not crucial to know the number of triple points
and cusps, but they are respectively $4$ and $8$, see either
\cite{P05} or \cite[Theorem 10.4.9]{Do12}.
Let $\Gamma_W=\pi^{-1}(\Gamma)$ be the double point locus of the
projection $\pi$. Then $\Gamma_W\subset\F_0$ is a curve of degree $20$.
Set $\nu:T\to\P^3$ the blow up of $\Gamma$  with exceptional divisor
  $E_\Gamma$. Then by Lemma~\ref{lem:blowup} $S_T$ is a smooth quadric.
\end{notation}
Firstly, we determine the class of the divisor  $\Gamma_W$, see also
the double point class formula in \cite[Equation 10.52]{Do12}. 
\begin{lemma}
  \label{lem:gammaintersection_lines} Let $l\subset S$ be a general
  line then
  $$\Gamma\cap l= 4$$
   and $\Gamma_W\sim\o_{\F_0}(4,8)$.
\end{lemma}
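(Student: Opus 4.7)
The plan is to determine the class of $\Gamma_W$ in $\mathrm{Pic}(\F_0)$ via the conductor/adjunction formula for the normalization $\pi\colon W\to S$, and then read off $|l\cap\Gamma|$ as an intersection number on $W$. Since the singularities of $S$ worse than an ordinary double curve along $\Gamma$ (the four triple points and the eight pinches) are finite, hence codimension $\geq 2$ on $S$, the reduced conductor of $\pi$ agrees with $\Gamma$ generically, and the standard conductor formula reads
$$\pi^{*}K_{S}\;=\;K_{W}+\Gamma_{W}$$
in $\mathrm{Pic}(W)$; this is the double point class formula \cite[Eq.~10.52]{Do12}. By adjunction on the sextic $S\subset\p^{3}$, $K_{S}\sim 2H_{S}$, while on $\F_{0}$ one has $H_{W}=F_{1}+3F_{2}$ (the class of $\o_{\F_{0}}(1,3)$) and $K_{W}=-2F_{1}-2F_{2}$. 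Substituting,
$$\Gamma_{W}\;=\;2H_{W}-K_{W}\;=\;4F_{1}+8F_{2}\;\sim\;\o_{\F_{0}}(4,8),$$
which proves the second assertion.

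For the first, I observe that under the embedding $\F_{0}\hookrightarrow\p^{7}$ by $\o_{\F_{0}}(1,3)$ the two rulings have $H_{W}$-degrees $3$ and $1$ respectively, so only the fibers $F\sim(0,1)$ map to lines, and a general line $l\subset S$ is $\pi(F)$ for such a fiber. For generic $l$ every point of $l\cap\Gamma$ is an ordinary node of $S$ met transversally by $l$, and of the two preimages under $\pi$ of that node exactly one lies on $F$ (the other lying on a different $(0,1)$-fiber). Consequently
$$|l\cap\Gamma|\;=\;F\cdot\Gamma_{W}\;=\;(0,1)\cdot(4,8)\;=\;4.$$

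The delicate point is the justification of the conductor formula in the presence of the worse singularities of $S$: since the triple points and pinches of $S$ lie in codimension $\geq 2$, they do not alter any linear-equivalence statement on $W$, so the formula reduces to the ordinary-double-curve case verified locally on the dense open locus where $S$ looks like two smooth sheets meeting transversally along $\Gamma$. As a sanity check, the class just computed gives $H_{W}\cdot\Gamma_{W}=(1,3)\cdot(4,8)=20$, consistent with the stated degree of $\Gamma_{W}$ in $\p^{7}$.
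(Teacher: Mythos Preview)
Your proof is correct, but you proceed in the reverse order from the paper and by a genuinely different method. The paper argues elementarily: for a ruling fibre $r\subset W$ it compares the residual intersection $r\cdot R=1$ of a general hyperplane through $r$ in $\P^7$ (using $r\cdot H_W=1$ and $r^2=0$) with the residual intersection $l\cdot D=5$ of a plane through $l=\pi(r)$ in $\P^3$; the excess $5-1=4$ counts the extra intersections created by the projection, i.e.\ $\sharp(\Gamma\cap l)=r\cdot\Gamma_W=4$. From this and $\deg\Gamma_W=20$ the class $(4,8)$ is then deduced. You instead invoke the conductor formula $\pi^*\omega_S=\omega_W(\Gamma_W)$ for the normalization $\pi$, combine it with adjunction $\omega_S\cong\o_S(2)$ on the sextic, and read off $\Gamma_W\sim 2H_W-K_W=(4,8)$ directly; the number $\sharp(\Gamma\cap l)=4$ then drops out as $(0,1)\cdot(4,8)$. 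Your route is quicker and more conceptual---indeed the paper itself flags the double point class formula \cite[Equation 10.52]{Do12} as an alternative just before the statement---while the paper's computation is entirely self-contained and avoids any appeal to duality on the singular surface $S$, relying only on naive intersection numbers in $\P^7$ and $\P^3$.
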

\begin{proof} Let $r:=\pi^{-1}(l)\subset W$ be the preimage and let
  $H\subset\P^7$  be a general hyperplane containing $r$.
Then $r^2=0$ and $H_{|W}=r+R$ for some residual curve
$R$. In particular, we have
$$1=r\cdot H=r^2+r\cdot R.$$
This shows that
$$r\cdot R=1.$$
Let $P\subset\P^3$ be a general plane containing $l$. Then
$P_{|S}=l+D$, for $D$ a plane curve of degree $5$.  That is  $D\cdot
l=5$.

Combining the two equations, we get
$$r\cdot \Gamma_W=D\cdot l-r\cdot R= 4.$$ Since the projection is general
$$r\cdot\Gamma_W=l\cap \Gamma=4.$$
In particular $\Gamma_W\sim \o(4,a)$ and since $\deg\Gamma_W=20$ we
conclude
$$20=\o(4,a)\cdot\o(1,3)=12+a.$$
That is 
$\Gamma_W\sim\o(4,8)$.
\end{proof}

Next, we compute the log Kodaira dimension of $(\p^3,S)$
\begin{lemma}\label{lem:Kneg} In the above notation we have
  $\overline{\kappa}(\p^3,S)<0$.
\end{lemma}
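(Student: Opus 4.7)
The plan is to work on a log resolution $\mu\colon Z\to T\to\p^3$ factoring through the partial resolution $\nu\colon T\to\p^3$ of Lemma~\ref{lem:blowup}. Since $S_T\cong\F_0$ lies in the smooth locus of $T$, the strict transform $S_Z$ is a smooth rational surface (a blowup of $\F_0$), and I intend to show $H^0(Z,m(K_Z+S_Z))=0$ for every $m\geq 1$, which by definition yields $\overline{\kappa}(\p^3,S)=-\infty$.

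The key numerical input is the normal bundle $N_{S_T/T}$. Since $S$ has generic multiplicity $2$ along $\Gamma$, $\nu^*S=S_T+2E_\Gamma$; combined with $\nu^*\o_{\p^3}(1)|_{S_T}=\o_W(1)=\o_{\F_0}(1,3)$ and Lemma~\ref{lem:gammaintersection_lines}'s identification $E_\Gamma|_{S_T}=\Gamma_W\sim\o_{\F_0}(4,8)$, this gives
$$N_{S_T/T}\;=\;S_T|_{S_T}\;=\;6(1,3)-2(4,8)\;=\;(-2,2).$$

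By adjunction $(K_Z+S_Z)|_{S_Z}=K_{S_Z}$, which has no pluri-sections since $S_Z$ is rational; hence any $s\in H^0(Z,m(K_Z+S_Z))$ vanishes on $S_Z$. Writing $s=\tau\cdot s_1$ with $\tau$ a local equation of $S_Z$ and iterating gives $s=\tau^k s_k$ with $s_k\in H^0(Z,m(K_Z+S_Z)-kS_Z)$, and the restriction $s_k|_{S_Z}$ lies in the linear system of $mK_{S_Z}-kN_{S_Z/Z}$. Intersecting with the strict transform $\widetilde F$ of a general fiber $F$ of one of the two rulings $\F_0\to\p^1$ (the one on which $N_{S_T/T}$ has positive degree),
$$(mK_{S_Z}-kN_{S_Z/Z})\cdot\widetilde F\;=\;m\cdot(-2)-k\cdot 2\;=\;-2(m+k)\;<\;0,$$
where the intersection numbers are read off from $\F_0$ because $\widetilde F$ avoids the blowup centers of $S_Z\to\F_0$. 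Since $\widetilde F$ moves in a pencil and is not contained in any fixed divisor, $s_k|_{S_Z}=0$ for every $k\geq 0$, so $s$ vanishes on $S_Z$ to infinite order and $s=0$ by Krull's theorem.

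The main technical point to verify with care is that the intersections $K_{S_Z}\cdot\widetilde F=-2$ and $N_{S_Z/Z}\cdot\widetilde F=2$ persist through the additional blowups needed to reach a log resolution $\mu\colon Z\to T$; this follows because a general $F$ can be chosen to avoid the finitely many blowup centers lying over $S_T$, so $\widetilde F\simeq F$ and the classes are inherited from $\F_0$.
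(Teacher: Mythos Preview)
Your argument is correct, but it takes a genuinely different route from the paper's proof. The paper stays on the partial resolution $T$ and pushes everything down to $\p^3$: one computes
\[
K_T+S_T=\nu^*\o_{\p^3}(2)-E_\Gamma,
\]
so a section of $m(K_T+S_T)$ corresponds to a surface $D\subset\p^3$ of degree $2m$ with $\mult_\Gamma D\ge m$. Intersecting such a $D$ with $S$ forces $\Gamma$ to appear in $D\cdot S$ with multiplicity at least $2m$, whence the impossible inequality $6\cdot 2m\ge 10\cdot 2m$. The only numerical input is $\deg\Gamma=10>6=\deg S$, which makes the argument very short and immediately reusable for any projected surface whose double curve has degree exceeding the degree of the surface (as in the proof of the Claim inside Lemma~\ref{lem:CEdeg6}).

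Your approach instead exploits the normal bundle $N_{S_T/T}\sim\o_{\F_0}(-2,2)$ and the adjunction/peeling exact sequence on a full log resolution $Z$. This is a bit longer and uses more of the fine geometry of the pair (you need $N_{S_T/T}\cdot f_1>0$ rather than just a degree inequality on $\p^3$), but it is entirely intrinsic to $(Z,S_Z)$ and would transport to situations where one lacks a convenient ambient space on which to run a B\'ezout-type count. Your verification that the intersection numbers $K_{S_Z}\cdot\widetilde F$ and $N_{S_Z/Z}\cdot\widetilde F$ are inherited from $\F_0$ is the right point to be careful about; since the additional centers needed for a log resolution over $T$ meet $S_T$ at worst in finitely many points (the terminal singularities lie off $S_T$, and resolving the non-snc locus of $S_T\cup E_\Gamma$ only requires point blow-ups on $S_T$), a general fiber $\widetilde F$ indeed avoids them, and the projection formula gives the claimed equalities.
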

\begin{proof}Let $\nu:T\to\p^3$ be the blow up of $\Gamma$, with
  exceptional divisor $E_\Gamma$ and $S_T$
  the strict transform of $S$.
  By Lemma~\ref{lem:blowup} $\overline{\kappa}(\p^3,S)=\kappa(T,S_T)$.
  We have
  $$K_T+S_T=\nu^*\o_{\p^3}(2)-E_\Gamma,$$ therefore to conclude, we have to prove that there is no surface
in $\p^3$ of degree $d$ having multiplicity at least $\frac{d}2$ along
$\Gamma$. Assume that such a surface exists and call it $D$. Since the
multiplicity of $S$ along $\Gamma$ is less than $\frac62$, we may
assume that $D\not\supset S$.
By construction $D\cdot S$ contains $\Gamma$ of multiplicity at least
$\deg D$, hence we obtain the contradiction
$$\deg(D\cdot S)=6\deg D\geq \deg D\deg \Gamma=10\deg D.$$
\end{proof}

Let us now apply Lemma~\ref{lem:blowup} and
Lemma~\ref{lem:gammaintersection_lines} to study the 2-ray game
originated by the blow up of $\Gamma$.

Let $\nu:T\to\p^3$ be the
blow up of the singular curve $\Gamma\subset S$, with exceptional
divisor $E_\Gamma$.
Then $\nu^*S=S_T+2E_\Gamma$, $Pic(T)$ has rank 2 and the strict transform $S_T$ is a smooth
quadric, by Lemma~\ref{lem:blowup}. Let $f_1$ and $f_2$ be the two
rulings, then, by Lemma~\ref{lem:gammaintersection_lines},
$$S_T\cdot f_1=18-16=2,\ \ S_T\cdot f_2=6-8=-2.$$
This shows that the cone of curves $NE(T)$ is spanned by:  fibers of the
blow up and the class of a curve  contained in $S$. On the other hand, $S$
is a smooth quadric and $S\cdot f_2<0$ therefore $[f_2]$ spans the second
ray. The $2$-ray game will force us to contract $f_2$, with a morphism
$\eta:T\to X$. Since $S_T\cdot f_2=-2$ we have $K_T\cdot f_2=0$.
Hence $X$ is a Fano 3-fold with a curve of canonical singularities.
As already stated, this is not enough to conclude our theorem, see for
instance Example~\ref{ex:dPfib}. Indeed,
there is no guarantee that the blow-up of $\Gamma$ is the first step
of a Sarkisov factorization  based on the linear system, say $\h$, that
produces the Cremona equivalence. This is  due to the fact that we could have
$S+R\in\h$ for some effective non trivial divisor $R$ and the
canonical threshold of the pair $(\p^3,\h)$ could be smaller than
$\frac12$.
On the other hand, it is an indication to study
deeper the geometry of the log pair $(\p^3,S)$.

Let me add a different  example of the $2$-ray game, where, even if the
Sarkisov factorization does not work; the surface is
Cremona equivalent to a plane.

\begin{example}\label{ex:dPfib}
 Let $W\subset\p^6$ be a del Pezzo surface of degree $6$ and
 $\pi:W\to S\subset\p^3$ a general projection. This time $W$ has
 sectional genus $1$ and, as before or via the double point formula
 \cite[Equation 10.52]{Do12}, we prove that   the singular
 curve $\Gamma$  has degree $9$. Let $\nu:T\to\p^3$ be the blow up of
 $\Gamma$, with exceptional divisor $E$ and strict transform
 $S_T$. Then $S_T$ is a del Pezzo surface of degree $6$ and, with a computation
 similar to that in Lemma~\ref{lem:gammaintersection_lines}, we see that  $S_T\cdot m_i=0$ for any $(-1)$-curve $m_i$ in $S_T$.
Note that lines, i.e. $(-1)$-curves, generate $Pic(S_T)$; therefore, $S_T$ is a fiber of a pencil of del Pezzo surfaces of degree 6. Incidentally, note that $36=6^2=9\cdot 4$. The del
   Pezzo fibration is the
   second ray of the two-ray game. Therefore, also in this case, we cannot
   continue the Sarkisov factorization. On the other hand, this time the
   fibration can be used to produce the required Cremona equivalence.

   Let $\pi:T\to\P^1$ be the $dP_6$ fibration and $S_\xi$ the generic
 fiber over $k=\C(t)$. It is well known, \cite{Ma72} see also
 \cite[Chap. IV Theorem 6.8]{Ko96}, that $S_\xi$ is rational over $k$,
 since the Brauer group of a $C_1$-field is trivial. Therefore the pair $(\P^1\times\P^2,\P^2)$ is a birational model of
 $(\P^3,S)$. This is enough to conclude that $(\P^3,S)$ is Cremona equivalent to a
 plane.

In general, I am not trying to  factor the Cremona equivalence in Sarkisov links. I only use this
 technique to improve the  log pairs and understand their geometry better.
\end{example}

The last ingredient we need is the following, probably well-known Lemma, of
independent interest.

\begin{lemma}\label{lem:extending_Cremona}
  Let $\omega:\p^2\dasharrow \p^2$ be a birational map and
  $H\subset\p^3$ is a plane. Then there is a birational map
  $\Omega:\p^3\dasharrow \p^3$ such  that
  $$\Omega_{|H}=\omega.$$
In particular, $\Omega(H)$ is a plane embedded linearly in $\p^3$.
\end{lemma}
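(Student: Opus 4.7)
The plan is to construct $\Omega$ explicitly as a ``suspension'' of $\omega$ over a projection from a point. Choose coordinates $[x_0:x_1:x_2:x_3]$ on $\p^3$ with $H=\{x_3=0\}$, fix representatives $\omega=[F_0:F_1:F_2]$ with $F_i\in\C[x_0,x_1,x_2]$ homogeneous of common degree $d$ and no common factor, and propose
$$\Omega\colon\p^3\dasharrow\p^3,\qquad[x_0:x_1:x_2:x_3]\longmapsto [F_0:F_1:F_2:\,x_0^{d-1}x_3].$$
All four components are homogeneous of degree $d$, do not vanish simultaneously on a dense open set, and the last one vanishes identically on $H$. In particular the restriction of $\Omega$ to $H$ is $[F_0:F_1:F_2:0]=\omega$ by inspection.

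The remaining point is birationality. Let $p=[0:0:0:1]$ and let $\pi\colon\p^3\dasharrow H$, $[x_0:x_1:x_2:x_3]\mapsto[x_0:x_1:x_2]$, be the linear projection from $p$. By construction one has the commuting relation $\pi\circ\Omega=\omega\circ\pi$, so $\Omega$ preserves the ruling of lines through $p$. On a generic such line $\{[a:b:c:t]:t\in\C\}$ (with $a\neq 0$) the map acts in the last coordinate by the linear isomorphism $t\mapsto a^{d-1}t$, sending the line bijectively onto the one over $\omega([a:b:c])$. Combined with the birationality of $\omega$ on $H$, this shows that the generic fibre of $\Omega$ is a single point, hence $\Omega\in\Bir(\p^3)$.

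The final assertion is then immediate: $\Omega(H)=\omega(H)=H$ is, tautologically, a linearly embedded plane. I do not anticipate any serious obstacle; the only mild point of care is the choice of the last component of $\Omega$, which must be homogeneous of degree $d$, vanish on $H$, and not vanish identically on $H$—any monomial of the form $x_3x_i^{d-1}$ works. An alternative, less hands-on route would be to factor $\omega$ into standard quadratic transformations and linear maps via Noether--Castelnuovo and to extend each factor separately to a Cremona of $\p^3$ fixing $H$, but the direct suspension above avoids the factorization entirely.
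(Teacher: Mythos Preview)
Your proof is correct and takes a genuinely different route from the paper's. The paper invokes the Noether--Castelnuovo factorization of $\omega$ into linear automorphisms and standard quadratic transformations, and extends each factor separately: linear maps extend trivially, while the standard Cremona of $\p^2$ extends to the quadro-quadric transformation of $\p^3$ associated to a point $p$ and a conic, which restricts to the standard Cremona on any plane through $p$ and sends such planes to planes. Your direct suspension bypasses Noether--Castelnuovo entirely, producing in one stroke an extension of the same degree $d$ as $\omega$. This is more elementary and, incidentally, makes the higher-dimensional generalization mentioned in the paper's subsequent remark completely transparent: the same recipe $[F_0:\ldots:F_n:x_0^{\,d-1}x_{n+1}]$ extends any $\omega\in\Bir(\p^n)$ to $\Bir(\p^{n+1})$, whereas the paper's argument, as written, relies on a factorization theorem specific to $\Bir(\p^2)$. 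On the other hand, the paper's approach decomposes $\Omega$ into classically understood links, which sits more naturally with the Sarkisov viewpoint used elsewhere in the article. One trivial slip: in your closing sentence, ``not vanish identically on $H$'' should read ``not vanish identically on $\p^3$''.
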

\begin{remark}
  Note that Lemma~\ref{lem:extending_Cremona} works in arbitrary
  dimension for linear
  spaces. I thank J\'er\'emy Blanc for the nice remark.
\end{remark}

\begin{proof}
  By Noether--Castelnuovo Theorem, $\omega$ can be factored into a
  sequence of  linear
  automorphisms and standard Cremona transformations. Hence, to
  conclude, it is enough to prove that those are extendible to $\p^3$. For linear
  automorphisms, it is clear. For the latter note that the quadro
  quadric map of $\p^3$, associated to the linear system of quadrics
  containing a point, say $p$,  and a conic, $C$,  restricts to the standard Cremona
  modification on a general plane through the base point $p$.
  To conclude, observe that linear automorphisms preserve linear spaces
  and the quadro-quadric map, centered in a point $p$,  sends planes
  through $p$ to planes.
\end{proof}
We are ready to prove the Theorem stated in the introduction.

\section{The surface $S$ is not Cremona equivalent to a plane}
We prove the statement by contradiction.
Assume that there is a birational map $\chi:\p^3\dasharrow
\p^3$ such that $\chi(S)=H$ is a plane.

Set $\chi=\varphi_{\h}$ and $\chi^{-1}=\varphi_{\h^{\prime}}$, for
linear systems $\h\subset|\o_{\p^3}(h)|$ and
$\h'\subset|\o_{\p^3}(h')|$.
Consider a resolution of the map $\chi$

   \[
 \xymatrix{
   &Z\ar[dl]_p\ar[dr]^q&  \\
 \p^3\ar@{.>}[rr]^\chi&              &\p^3}
\]
and let $S_Z$ be the strict transform of $S$, that coincides with
strict transform of $H$, in $Z$. I'm assuming that  both $Z$ and
$S_Z$ are smooth. This
induces the following  restricted diagram 
  \[
 \xymatrix{
   &S_Z\ar[dl]_{p_{|Z}}\ar[dr]^{q_{|Z}}&  \\
 S\ar@{.>}[rr]^{\chi_{|S}}&              &H}
\]
We aim to study the restricted diagram next.

Recall that the surface $S$ is the projection of $:W\subset\P^7$, where
$W$ is the embedding of $\F_0$ via the linear system
$|\o_{\F_0}(1,3)|$.
Let $\pi:W\to S$ be the projection.
For our purposes, it is also useful to describe $W$ in a different way.

Let  $\L=|\I_{q_1^3\cup q_2}(4)|\subset|\o_{\p^2}(4)|$ be the linear
system of quartics with a 3-ple point in $q_1$ and passing through
$q_2$, then
$$W=\varphi_\L(\p^2).$$
Note that $\varphi_\L^{-1}:W\dasharrow\p^2$ is induced by the linear
system $|\I_{p_1}(1,1)|\subset|\o_{\F_0}(1,1)|$ of rational quartics,
in the embedding of $W\subset\p^7$,  with a simple
base point $p_1$. 
 Then there is a base-point-free linear system of quartics $\Lambda^\prime\subset\L$
 such that
 $$\psi:=\varphi_{\Lambda^\prime}:\p^2\dasharrow S.$$
 
 Set
 $$\omega:=\psi^{-1}\circ(\chi^{-1})_{|H}:\p^2\dasharrow\p^2$$
 and
 $$\Omega:\p^3\dasharrow\p^3$$
 its extensions to $\p^3$, as in
 Lemma~\ref{lem:extending_Cremona}.
 Then by construction $\Omega\circ\chi:\p^3\dasharrow\p^3$ is well
 defined on the generic point of $S$ and 
 $$(\Omega\circ\chi)_{|S}=\psi^{-1}. $$
 
Then, up to replacement $\chi$ with $\Omega\circ\chi$, we may  assume
that $(\chi^{-1})_{|H}=\psi$.
\begin{remark}
  Here we are reviving Cremona's classifying
  method for Cremona transformations of $\P^3$. That is, reduce the
  study of birational modifications of $\P^3$ to the one of plane
  transformations, \cite{Cr871}.
\end{remark}
 In other words, we
 may assume that
 \begin{equation*}
   \label{eq:defHp}
   \h^\prime_{|H}=\Lambda^\prime+F^{\prime},
 \end{equation*}
 for some fixed
 divisor $F^\prime$ and
 \begin{equation*}
   \label{eq:defH}
\h_{|S}=\Lambda+F,
 \end{equation*}
  for some fixed divisor
  $F$ and $\pi^{-1}_*\Lambda\in|\o_{\F_0}(1,1)|$.
  
 Let me stress some interesting consequences :
 \begin{itemize}
\item[-] $S_Z$ is the blow up of $\p^2$ in the points, $q_1, q_2$,
  equivalently the blow up of $S$ along $\Gamma$ and subsequently in a point $p_1$,
\item[-] $p_{|Z}$ is the blowing down of the line $M$ spanned by $q_1$
   and $q_2$ to the point $p_1$, followed by a finite morphism
   \item[-] $q_{|Z}$ is the blowing down of the two rulings, say $F_1$
     and $F_2$, of $S$
     passing through $p_1$,
     \item[-] the rulings of $S$ are mapped to the pencils of lines through
       $q_1$ and $q_2$,
       \item[-] $Pic(S_Z)=\Span{F_1,F_2, M}$, and all the three
         generators are $(-1)$-curves with $F_1\cdot F_2=0$ and $F_i\cdot M=1$.
       \end{itemize}
 A divisor $aF_1+bF_2+cM\in Pic(S_Z)$ will be denoted by $(a,b,c)$. In this notation, the strict transform of the ruling of lines in
 $S$ is $(0,1,1)$ while the strict transform of the ruling of twisted
 cubics corresponds to $(1,0,1)$.  A divisor $D=(a,b,c)$ is effective
 if and only if $a\geq 0$, $b\geq 0$ and $c\geq 0$ and
 \begin{itemize}
 \item[i)]  it is the pull
   back of divisor in $S_T$ if and only if $a+b=c$
   \item[ii)] it is the pull back of a divisor in $H$ if and only if $a=b=c$.
 \end{itemize}
 Let us set some further notation:
 \begin{itemize}
 \item[-] $\nu:T\to \p^3$ the blow up of $\Gamma$, with exceptional
   divisor $E_\Gamma$ and strict transform $S_T$; recall that by
   construction $S_T$ is
   a smooth quadric with $\nu^*\o_{S}(1)\sim\o_{\F_0}(1,3)$ and, by
   Lemma~\ref{lem:gammaintersection_lines} and Lemma~\ref{lem:blowup}, $E_{\Gamma|S_T}\sim(4,8)$,
 \item[-]  $\nu_1:T_1\to T$ the blow up of the fixed component $F$;
the surface $S$ is smooth, then  $F\subset S$ is a Cartier divisor and
   $S_{T_1}\cong S_T\cong {\mathbb F}_0$, by the universal property of blow
   up,\cite[Corollary 7.15]{Ha77}.
   \item[-] $\mu:X\to\p^3$ the blow up the fixed component $F'$; the
   surface $H$ is smooth,  then  $F'\subset H$ is a Cartier divisor and
   $H_{X}\cong H\cong \p^2$, by the universal property of blow
   up,\cite[Corollary 7.15]{Ha77}.
 \end{itemize}
After these blow ups we have  ${\rm Bs}\h_{T_1}\cap S_{T_1}\subset p_1$
and $ {\rm Bs}\h^\prime_X\cap H_X\subset\{q_1,q_2\}$.
Then we have
$$p=\nu_2\circ\nu_1\circ\nu$$
and
$$q=\mu_1\circ\mu,$$
for some birational morphisms $\nu_2:Z\to T_1$ and $\mu_1:Z\to X$ such
that  the restricted morphisms $\nu_{2|S_Z}$ is the blow up of the
point $p_1$ and $\mu_{1|S_Z}$ is the blow up of the points $q_1$,
$q_2$.

By our construction and
Lemma~\ref{lem:gammaintersection_lines},  we have
\begin{equation}
  \label{eq:relations_in_pic}
  \begin{array}{ll}
    ((\nu_2\circ\nu_1)^*E_\Gamma)_{|S_Z}\sim(4,8,12),&\\
    p^*\o_{\p^3}(1)_{|S_Z}\sim (1,3,4),&\\
     q^*\o_{\p^3}(1)_{|S_Z}\sim (1,1,1)
  \end{array}.
\end{equation}
And we may write
\begin{eqnarray}
  \label{eq:p*S}
  p^*S_{|S_Z}=S_{Z|S_Z}+(8,16,24)+ E_{S}+(0,0,a+1)\sim (6,18,24)\\
  \label{eq:q*H}
  q^*H_{|S_Z}=S_{Z|S_Z}+E_{H}+(b_1+1,b_2+1,0)\sim (1,1,1)
\end{eqnarray}
%
where:
\begin{itemize}
\item[-] $E_S$ is the total transform of curves in $S_T$, blown up along the
  map $\nu_1$, 
  \item[-] $E_H$ is the total transform of curves in $H$, blown up along
    the map $\mu$,
    \item[-] $a$ is a non negative integer related to the map $\nu_2$,
      \item[-] $b_1$ and $b_2$ are non negative integers related to  the map $\mu_1$.
\end{itemize}

In particular $E_{S}$ and $E_{H}$ are the pull-back of
curves in $S_T$ and $H$ respectively, then by items i) and ii),  for non-negative
integers $s_1,s_2$ and $e$,   we have
\begin{equation}
  \label{eq:pullback}
  E_S\sim(s_1,s_2,s_1+s_2),\ \ E_{H}\sim(e,e,e).
\end{equation}

From Equations~(\ref{eq:p*S})~(\ref{eq:q*H}) we get 
\begin{equation*}
  E_{H}\sim E_{S}+(2-b_1,-2-b_2,2+a),
\end{equation*}
and finally, plugging in  Equation~(\ref{eq:pullback})
\begin{equation}
  \label{eq:E-E}
  (e,e,e) \sim (s_1+2-b_1,s_2-2-b_2,s_1+s_2+2+a).
\end{equation}
Then we have
\begin{equation}
  \label{eq:LSf}
  \left\{
      \begin{array}{rl}
        e&=s_1+2-b_1\\
        e&=s_2-2-b_2\\
        e&=s_1+s_2+2+a
      \end{array}\right.
\end{equation}
therefore
\begin{equation}
  \label{eq:LSf2}
  \left\{
      \begin{array}{rl}
          e+b_1&=s_1+2\\
        e&=s_2-2-b_2\\
        0&=b_1+s_2+a
      \end{array}\right. .
  \end{equation}
  Since all integers are non-negative, the third equation yields
  $a=b_1=s_2=0$ and from the second equation we derive the impossible $e=-2-b_2$.
This contradiction shows that the map $\chi$ cannot exist and proves
that $S$ is not Cremona equivalent to a plane.

\begin{remark}
  Let me stress that, in the notation of \cite{MP12}, thanks to
  \cite[Corollary 1.7]{Me21}, this shows that
  any good model $(X,S_X)$ birational to $(\p^3,S)$ is such that
  $\rho(X,S_X)=0$. In other words, for any 
  Mori fiber space $(X,S_X)$, with $S_X$ smooth, birational to $(\p^3,S)$, the surface $S_X$
  is never transverse to the MfS fibration. This
  answers to a question in \cite[Remark 4.8]{MP12}.
\end{remark}

\section{Cremona equivalence to a plane is neither open nor closed}
It is easy and not surprising that being Cremona equivalent to a
plane, like rationality for 3-fold hypersurfaces, is neither closed nor open among log pairs, without further hypothesis.
For instance, one can consider a family of smooth cubic surfaces degenerating to a
smooth cubic cone or a family of smooth quartic surfaces degenerating to a rational quartic.

In this section, we use the result in Theorem~\ref{thm:main} to prove
that restricting to  pairs with a negative log Kodaira dimension is not
enough to gain neither openness nor closedness.

\begin{theorem}\label{th:noopen}  There exist families of
  projective surfaces in $\p^3$, $\phi: X\to B$ over connected varieties
  $B$, such that for every $b\in B$ the fiber $X_b = \phi^{-1}(b)$
  satisfies  $\overline{\kappa}(\p^3,X_b)<0$ and for every $b\neq 0$
  $X_b$ is 
  not Cremona equivalent to a plane, while $X_0$ is  Cremona equivalent to a plane.   
\end{theorem}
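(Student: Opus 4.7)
The plan is to construct an explicit one-parameter degeneration $\phi: X \to B$ of the surface $S$ from Theorem~\ref{thm:main} into a rational sextic $X_0 \subset \p^3$ admitting a Cremona equivalence to a plane, while preserving negativity of the log Kodaira dimension along the family. Concretely, fix $W \subset \p^7$ as in Notation~\ref{notation:W} and parametrize linear projections $\pi_b: W \to \p^3$ by their projection centers, where $b$ varies in a one-parameter family $B$ of $3$-planes in $\p^7$ disjoint from $W$; set $X_b := \pi_b(W) \subset \p^3$. For generic $b \neq 0$, the center is in general position, $X_b$ is projectively equivalent to $S$, and Theorem~\ref{thm:main} shows that $X_b$ is not Cremona equivalent to a plane. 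At $b = 0$, choose the center in a special position (for instance meeting a trisecant line of $W$, or a suitable secant variety) so that $X_0$ is an irreducible rational sextic acquiring additional structure, such as an extra line of double points or a reducible double curve, beyond the generic double curve $\Gamma$.

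The verification has three parts. First, $\overline{\kappa}(\p^3, X_b) < 0$ uniformly in $b$. The argument of Lemma~\ref{lem:Kneg} only requires the inequality $\deg \Gamma_b \geq 7$; the generic value $\deg \Gamma = 10$ persists (or increases) under specializations of the projection center, since components of the singular locus can only acquire, not lose, degree. Second, for every $b \neq 0$, $X_b$ is $\mathrm{PGL}_4$-equivalent to $S$ as embedded subvarieties of $\p^3$, so Cremona equivalence to a plane for $X_b$ would transfer to $S$, contradicting Theorem~\ref{thm:main}. Third, and this is the delicate point, $X_0$ must be shown Cremona equivalent to a plane. Following the philosophy of Lemma~\ref{lem:blowup} and Example~\ref{ex:dPfib}, one blows up the singular curve $\Gamma_0 \subset X_0$ as in Lemma~\ref{lem:blowup} to obtain a log pair $(T_0, X_{0,T_0})$; the extra structure at $b=0$ provides an additional extremal ray beyond the two present in Remark~\ref{rem:rk2}, and the resulting 2-ray game terminates in a Mori fiber space $Y \to \p^1$ (typically a conic or del Pezzo fibration, in the spirit of Example~\ref{ex:dPfib}) in which the strict transform of $X_0$ is a birational section. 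Rationality of the generic fiber over $\C(t)$ then gives a birational model $(\p^1 \times \p^2, \p^2)$ of $(\p^3, X_0)$, establishing the Cremona equivalence to a plane.

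The main obstacle is the explicit choice of the special projection at $b = 0$ and the verification of the resulting 2-ray game: one must find a specialization rich enough to produce an extra extremal ray after the blow-up of $\Gamma_0$, yet mild enough to keep $X_0$ irreducible and to preserve $\overline{\kappa}(\p^3, X_0) < 0$. Once such an $X_0$ is exhibited, non-openness is immediate: the locus $\{b \in B : X_b \text{ is Cremona equivalent to a plane}\}$ is exactly $\{0\}$, closed but not open, so Cremona equivalence to a plane fails to be an open condition among log pairs of negative log Kodaira dimension.
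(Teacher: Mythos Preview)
Your overall strategy --- vary the projection center $\Pi_b \in \mathbf{G}(3,7)$ so that $X_b = \pi_{\Pi_b}(W)$ is a general projection for $b\neq 0$ and a special one for $b=0$ --- is exactly the paper's. The gap is at the decisive step: you never specify the special center, and the mechanism you propose for proving $X_0$ Cremona equivalent to a plane (extra component of $\Gamma_0$ $\Rightarrow$ extra extremal ray $\Rightarrow$ favourable $2$-ray game) is unsubstantiated. The paper's own analysis shows that the $2$-ray game on the blow-up of $\Gamma$ leads to a \emph{non}-terminal output for the general $S$; there is no reason a mild extra nodal line should flip this behaviour, and you acknowledge as much in your final paragraph. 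As written, the proposal is a plan with the hard part missing.

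The paper sidesteps the $2$-ray game entirely by a much simpler choice of special center. Pick five general points $x_0,\ldots,x_4\in W$ and take $\Pi_0$ a general $3$-plane inside $\Lambda=\langle x_0,\ldots,x_4\rangle\cong\p^4$. Since $\Pi_0$ is a hyperplane of $\Lambda$, all five $x_i$ project to a single point of $\p^3$, so $X_0$ is a sextic with a point of multiplicity $5$, i.e.\ a monoid. Monoids are classically Cremona equivalent to a hyperplane (projection from the $(d-1)$-fold point extends to a Cremona map), so $X_0$ is Cremona equivalent to a plane with no Mori-theoretic work. The negativity $\overline{\kappa}(\p^3,X_0)<0$ then follows for free from the Cremona equivalence itself.

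One minor correction, independent of the gap: your claim that $X_b$ is $\mathrm{PGL}_4$-equivalent to $S$ for $b\neq 0$ is false --- distinct general projections of $W$ are not projectively equivalent. What you actually need, and what suffices, is that for $b\neq 0$ the center $\Pi_b$ is general, so that Theorem~\ref{thm:main} applies directly to $X_b$.
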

\begin{proof}
  Let $W\subset\p^7$ be the Segre-Veronese embedding of
  $\p^1\times\p^1$ we considered in Notation~\ref{notation:W}. Fix $5$
  general points $\{x_0,\ldots,x_4\}\subset W$ and let
  $\Lambda=\Span{x_0,\ldots,x_4}$.
  Set $\Pi_0\subset\Lambda$ a general linear space of dimension $3$ and
  $\Pi_1\subset\p^7$ is a general linear space of dimension
  3. Let $B\subset{\mathbf G}(3,7)$ be a rational curve, parameterized
  by $t$, through
  $[\Pi_0]$ and $[\Pi_1]$ in the Grassmannian variety. Let $X_t$ be the
  projection of $W$ from $\Pi_t$, the linear space associated with the
  parameter $t$. Then,
without loss of generality, we may assume that,  for
$t\neq 0$,  $X_t$ is a general projection.
Then,  by Theorem~\ref{thm:main}  for $t\neq 0$, 
$X_t$  is not Cremona equivalent to a
  plane. On the other hand, by construction
  $X_0$ is a sextic with a 5-tuple point, and it is therefore Cremona
  equivalent to a plane, like any monoid, see for instance \cite{Me22}.
\end{proof}

To study closedness, we start by studying the Cremona equivalence of a different
quartic surface, known as the Bordiga surface.

  \begin{lemma}\label{lem:CEdeg6} Fix $10$ general points
    $\{x_1,\ldots, x_{10}\}\subset\p^2$.
Consider the
linear system
$\L:=|\I_{x_1\cup\ldots\cup x_{10}}(4)|$ and let 
$X:=\varphi_\L(\p^2)\subset\p^4$ be the image in $\p^4$.
Let $S'$ be a general linear projection of  $X$ to $\p^3$, then
$\overline{\kappa}(\p^3,S')<0$ and $S'$ is Cremona equivalent to a plane.
  \end{lemma}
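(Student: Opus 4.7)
The plan is in two parts, following the paradigm of Section~2.

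\emph{Part 1: $\overline{\kappa}(\p^3,S')<0$.} I would imitate Lemma~\ref{lem:Kneg}. Writing $H$ for the pull-back of a line in $\p^2$ and $E_1,\ldots,E_{10}$ for the exceptional divisors on $X=\Bl_{x_1,\ldots,x_{10}}\p^2$, the Bordiga surface has canonical class $K_X=-3H+\sum E_i$ and hyperplane class $H'_X=4H-\sum E_i$, hence $(K_X+H'_X)\cdot H'_X=H\cdot H'_X=4$ and the sectional genus is $g(X)=3$. A general plane section of $S'$ is therefore a plane sextic of geometric genus $3$, so by the Pl\"ucker nodal formula it has $\binom{5}{2}-3=7$ nodes, whence $\deg\Gamma'=7$. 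Since $\deg S'=6<7=\deg\Gamma'$, the argument of Lemma~\ref{lem:Kneg} forbids any effective $D\subset\p^3$ of degree $d$ with $\mult_{\Gamma'}D\ge d/2$ and $D\not\supset S'$, as one would then obtain the impossible $6d\ge 7d$.

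\emph{Part 2: Cremona equivalence.} I would run a two-ray game on $\nu:T\to\p^3$, the blow-up of $\Gamma'$ provided by Lemma~\ref{lem:blowup}, where $\rho(T)=2$ and $S'_T\cong X$ is smooth Bordiga on the terminal locus. The analogue of Lemma~\ref{lem:gammaintersection_lines} gives $\deg\Gamma_X=14$, and the double-point-class formula yields $[\Gamma_X]=(d-4)H'_X-K_X=11H-3\sum E_i$. Combining with $S'_T\sim\nu^*\o(6)-2E_{\Gamma'}$ and restricting to $S'_T\cong X$ gives
\[
(S'_T)_{|S'_T}=6H'_X-2\Gamma_X=2H
\]
on $X$. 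Hence the curves $C\subset S'_T$ with $S'_T\cdot C=0$ are precisely the ten $(-1)$-lines $E_i$; their strict transforms $\tilde L_i\subset T$ are all numerically equivalent and span the second extremal ray of $\NEbar(T)$.

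A direct computation yields $K_T\cdot\tilde L_i=-4+3=-1$ and, from $0\to\o(-1)\to N_{\tilde L_i/T}\to\o\to 0$, $N_{\tilde L_i/T}=\o(-1)\oplus\o$; so $\tilde L_i$ deforms in a one-parameter family of trisecants to $\Gamma'$, sweeping a trisecant scroll $D\subset\p^3$. I would then analyse the divisorial Sarkisov contraction $\eta:T\to T'$ associated to this ray: it collapses the strict transform of $D$ to a curve in $T'$, while on $S'_T$ it blows down the ten $(-1)$-curves $E_i$, so $S'_{T'}\cong\p^2$. Since $T'$ has $\rho=1$, is Fano with $\overline{\kappa}(T',S'_{T'})<0$ and contains a smooth $\p^2$ as an effective divisor, while no smooth quadric threefold in $\p^4$ contains a plane, one concludes $T'\cong\p^3$ with $S'_{T'}$ a hyperplane. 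The composition $\nu\circ\eta^{-1}:\p^3\dasharrow\p^3$ is then the desired Cremona equivalence. The main obstacle is to guarantee that the two-ray game terminates in a single divisorial link, without intervening flips or flops; this amounts to a direct geometric analysis of the trisecant scroll of $\Gamma'$ and to verifying extremality of the class $[\tilde L_i]$ on $T$.
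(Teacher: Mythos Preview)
Your Part~1 is correct and matches the paper's argument essentially verbatim.

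For Part~2, your setup and intersection computations on $T$ agree with the paper's (your use of the double-point class to get $\Gamma_X\sim 11H-3\sum E_i$ and hence $(S'_T)_{|S'_T}\sim 2H$ is a clean alternative to the paper's residual-curve calculation, which reaches the same numbers $S'_T\cdot m_i=0$ and $K_T\cdot m_i=-1$). However, the paper does \emph{not} attempt to run the link to its end and identify the target. Instead it argues as follows: from $S'_T\cdot m_i=0$ and $S'_T\cdot C>0$ for every irreducible $C\subset S'_T$ other than the $m_i$, the divisor $S'_T$ is nef; since $-K_T$ is positive on both rays of $\NEbar(T)$, the threefold $T$ is Fano. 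The contraction of $[m_i]$ is therefore birational, so some effective divisor $D$ has $D\cdot m_i<0$, placing $S'_T$ in the \emph{interior} of the effective cone and forcing the effective threshold $\rho(T,S'_T)=\inf\{t\in\Q\mid tK_T+S'_T\ \text{effective}\}$ to be strictly positive. Since $(T,S'_T)$ is a good model of $(\p^3,S')$, Corollary~1.7 of \cite{Me20} then gives Cremona equivalence to a plane directly, with no need to name the second contraction.

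Your explicit approach has a genuine gap beyond the one you flag. Even granting a single divisorial link with no flips, the identification ``$T'\cong\p^3$ because it is Fano of Picard rank~$1$, contains a smooth $\p^2$, and is not a smooth quadric'' is not an argument: there are many $\Q$-Fano threefolds of Picard rank~$1$, $T'$ is a~priori only terminal (not smooth), and excluding the quadric decides nothing. You would need at least to compute the Fano index of $T'$ or the normal bundle of the plane inside it. Likewise, the assertion that $\eta$ restricts on $S'_T$ to the blow-down of the ten $(-1)$-curves, yielding $S'_{T'}\cong\p^2$, requires knowing that $D\cap S'_T$ is exactly $\sum m_i$ with the right multiplicities. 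The paper's effective-threshold route sidesteps all of this.
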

  \begin{proof}
Let $\nu':T'\to\p^3$ be the blow up of $\Gamma'$, the singular curve
of $S'$.  Then, by Lemma~\ref{lem:blowup}, the strict transform $S^\prime_{T'}\iso X$ is the blow up of
 $\P^2$ in $Z:=\{p_1,\ldots, p_{10}\}$. The sectional genus of $X$ is $3$
 and therefore  we have $\deg\Gamma'=7$, recall the computation in Notation~\ref{notation:W}.

 \begin{claim}
   $\overline{\kappa}(\p^3,S')<0$
   \end{claim}
   \begin{proof}[Proof of the Claim] As in the proof of
     Lemma~\ref{lem:Kneg}, the claim is equivalent to proving that there are no effective
irreducible divisors
$D\subset\p^3$  such that
$$\mult_\Gamma D\geq \frac{\deg D}2. $$
Assume that such a divisor exists, then $D\cdot S$ has to contain the
curve $\Gamma'$ of multiplicity $\geq \deg D$, Hence we derive the contradiction
$$6\deg D=\deg(D\cdot S)\geq \deg\Gamma'\deg D=7\deg D.$$
   \end{proof}

 Let
 $m_i\subset X$ be the exceptional divisors of the blow-up. Then $m_i\subset
 X$ is a line and arguing as in
 Lemma~\ref{lem:gammaintersection_lines} we have, with $\nu$ the blow
 up of the curve $\Gamma$,
 \begin{equation}
   \label{eq:d6g3}
      \sharp(\Gamma\cap \nu(m_i))=3
 \end{equation}
This gives 
\begin{equation}
  \label{eq:d6g34}
  S^\prime_{T'}\cdot m_i= 6-6=0,\ \ K_{T'}\cdot m_i=-4+3=-1.
\end{equation}

The Picard group of $X$ is generated by the exceptional divisors
$\{m_i\}$ and the strict transform of a  line through two points in $Z$,
say $c\subset X$. Then $c$ is a conic and $c^2=-1$. Let $H\subset\p^4$ be a
hyperplane containing $c$ and $R$ the residual curve
$$H\cap X=c+R.$$
Then we have
\begin{equation}
  \label{eq:d6g32}
  2=(c+R)\cdot c=-1+R\cdot c.
\end{equation}

Looking at this configuration after the projection $\pi$ to $\p^3$
gives:
\begin{itemize}
\item[-] $\pi(c)$ is a conic,
\item[-] $\pi(R)$ is a plane curve of degree 4.
\end{itemize}
This yields
$$\sharp(\pi(R)\cap \pi(c))=8$$
and together with Equation~(\ref{eq:d6g32})
$$\sharp(\Gamma\cap\pi(c))=5.$$
Hence we conclude
\begin{equation}
  \label{eq:d6g33}
  S_{T'}\cdot c=12-10=2,\ \ K_{T'}\cdot c=-8+5=-3.
\end{equation}
Let $C\subset S_T$ be an irreducible effective curve, then
$$C\equiv\alpha
c+\sum_1^{10}\beta_i m_i,$$
and either $\alpha=0$,  and $\sum\beta_i=1$ or  $\alpha>0$. In particular
if $C\neq m_i$
\begin{equation}
  \label{eq:dotpositive6}
  S_{T'}\cdot C>0.
\end{equation}
Equations~(\ref{eq:dotpositive6}) and (\ref{eq:d6g34}) show that $S_{T'}$
is nef.
\begin{claim}
  $T'$ is a Fano 3-fold.
\end{claim}
\begin{proof}[Proof of the Claim] The cone of effective curves has
  dimension 2, $S_{T'}$ is nef and $S_{T'}\cdot m_i=0$.
  Therefore, the cone is closed and
  spanned by the curves contracted by $\nu'$ and the curves
  numerically proportional to $m_i$.
The map $\nu'$ is a terminal blow up, hence $-K_{T'}$ is relatively
ample and by Equation~(\ref{eq:d6g34}) we have
$-K_{T'}\cdot m_i=1.$ Then $T'$ is Fano.
\end{proof}

Let $\mu:T'\to \tilde{T}$ be the contraction of the second extremal ray
$[m_i]$. We have
$S_{T'}\cdot m_i=0$ and by Equation~(\ref{eq:dotpositive6}) we
conclude that  $\mu$ is a
birational contraction. In particular, there is an effective divisor $D\subset T'$
with $D\cdot m_i<0$. Then $S_{T'}$ is in the interior of the
effective cone and the effective threshold
$$\rho(T',S_{T'}):=\inf\{t\in {\mathbb Q}|tK_{T'}+S_{T'}\mbox{ is effective}\}$$
is strictly positive.
In the notation of \cite{Me20}, by Lemma~\ref{lem:blowup}, $(T',S_{T'}
)$ is a good model of the pair $(\p^3,S')$. Therefore, by
\cite[Corollary 1.7]{Me20}  $S$ is Cremona equivalent to a plane.
\end{proof}

Next, we recall  a degeneration argument to study point
collisions. For this, we use notations and results in \cite{GM19}.

\begin{con}[Specialization with $6$ collapsing simple points]\label{not:collapse}
  Set $V=\p^2\times\Delta$,  for $\Delta\ni 0$ a complex disk, and  
  \begin{itemize}
  \item[-]  $\pi:V\to\Delta$, $\tau:V\to\p^2$  the canonical
    projections,
  \item[-] $|\o_V(d)|:=|\tau^*(\o_{\p^2}(d)|$.
    \item[-] $\L_t:=\L_{|\pi^{-1}(t)}$ the restriction to the fiber of a
      linear system $\L$ on $V$.
  \end{itemize}

Fix $6$ general sections  $\{\sigma_1,\ldots,\sigma_{6}\}$ such that
$\sigma_i(0)=p_1$, and set $Z:=\cup_i\sigma_i(\Delta)$.
Let $Y\to V$ be the blow up of $V$ at the point $p_1$, with exceptional divisor $E$. Then we have natural morphisms
$\tau_Y:Y\to\p^2$, a degeneration $\pi_{Y}:Y\to\Delta$, and sections
$\sigma_{Y,i}:\Delta\to Y$. 
The special fiber $W_0$ is given by $E\cup V_0$, 
where $V_0$ is $\p^2$ blown up in one point and $E\cong\p^2$.
Keep in mind that since the sections $\sigma_i$'s are general $\{\sigma_{Y,i}(0)\}$ are general points of $E$. 

We are interested in the  flat  limit scheme $Z_0$. By  \cite[Lemma
20]{GM19} $Z_0$ is a scheme of length $6$ and multiplicity $3$. Then
$Z_0$ is the triple point in $p_1$.
\end{con}

We are ready to prove that Cremona equivalence is not a closed
condition among pairs with negative log Kodaira dimension.

\begin{theorem}\label{th:noclose}  There exist a family of
  projective surfaces in $\p^3$, $\phi: X\to B$ over a connected variety
  $B$, such that for every $b\in B$ the fiber $X_b = \phi^{-1}(b)$
  satisfies  $\overline{\kappa}(\p^3,X_b)<0$ and for every $b\neq 0$
  $X_b$ is 
  Cremona equivalent to a plane, while $X_0$ is  not Cremona equivalent
  to a plane.
\end{theorem}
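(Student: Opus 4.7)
I would apply Construction~\ref{not:collapse} to the Bordiga base locus of Lemma~\ref{lem:CEdeg6}. Fix four general points $x_1,\ldots,x_4\in\p^2$ and make six further general points $x_5,\ldots,x_{10}$ collide to a single point $p_1\in\p^2$ via sections of $\p^2\times\Delta\to\Delta$. After passing to the modified total space $Y\to\p^2\times\Delta$, the central fibre is $V_0\cup E$, with the six colliding sections meeting $E\cong\p^2$ in six general points. Endow $Y$ with the line bundle $\o_Y(4\tau_Y^*H-3E)$ and the ten section-vanishing conditions; this has constant relative fibre-dimension $4$, and post-composition with a generic linear projection $\P^4\dasharrow\P^3$ produces the desired family $\phi\colon\mathcal X\to\Delta$ of degree-$6$ subschemes of $\P^3$.

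For $t\neq 0$ the fibre linear system is the Bordiga system $|\I_{x_1(t),\ldots,x_{10}(t)}(4)|$, so $\mathcal X_t$ is a generic linear projection of a Bordiga surface in $\P^3$; by Lemma~\ref{lem:CEdeg6} and the negativity computation in its proof, $\overline\kappa(\P^3,\mathcal X_t)<0$ and $\mathcal X_t$ is Cremona equivalent to a plane.

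For $t=0$ the relevant linear system on $V_0$ is $|\I_{p_1^3\cup x_1\cup\ldots\cup x_4}(4)|$, which factors as the embedding $\varphi_{|\I_{p_1^3\cup x_1}(4)|}\colon\p^2\to W\subset\P^7$ of Notation~\ref{notation:W} followed by the projection from the $\P^2\subset\P^7$ spanned by $\varphi(x_2),\varphi(x_3),\varphi(x_4)$, while on $E\cong\p^2$ the induced system is that of plane cubics through six general points. The key identification is that, up to an ambient Cremona modification of $\P^3$ (constructed via Lemma~\ref{lem:extending_Cremona} to absorb the three auxiliary incidences $\varphi(x_2),\varphi(x_3),\varphi(x_4)$ and the residual del Pezzo component coming from $E$), $\mathcal X_0$ is Cremona-equivalent to the surface $S$ of Theorem~\ref{thm:main}. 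Thus Theorem~\ref{thm:main} forces $\mathcal X_0$ not to be Cremona equivalent to a plane, while $\overline\kappa(\P^3,\mathcal X_0)<0$ follows component-wise, using Lemma~\ref{lem:Kneg} for the $V_0$-contribution and a direct Fano calculation for the cubic del Pezzo part coming from $E$.

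The main obstacle is precisely this identification of $\mathcal X_0$ with a Cremona model of $S$. The naive limit is a projection of $W$ from a $\P^3\subset\P^7$ meeting $W$ transversely in three prescribed points --- a non-generic projection --- together with a residual cubic del Pezzo component; showing that a single birational self-map of $\P^3$ simultaneously removes these special incidences and the residual component, landing at the generic projection $S$, is the technically delicate step. This absorption should be feasible by Lemma~\ref{lem:extending_Cremona} together with a careful 2-ray-game analysis, in the spirit of the proof of Theorem~\ref{thm:main}.
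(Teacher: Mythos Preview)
Your construction has a genuine gap exactly where you flag it: the identification of $\mathcal X_0$ with a Cremona model of $S$. Theorem~\ref{thm:main} is stated and proved only for a \emph{general} linear projection of $W\subset\P^7$. Your limit, by your own description, is a projection of $W$ from a centre meeting $W$ in three prescribed points (together with a residual cubic del Pezzo piece coming from $E$), so Theorem~\ref{thm:main} does not apply to it. To conclude that $\mathcal X_0$ is not Cremona equivalent to a plane you would have to prove that your special, possibly reducible, limit surface is Cremona equivalent to the generic projection $S$; you invoke Lemma~\ref{lem:extending_Cremona} and ``a careful 2-ray-game analysis'' but give no actual argument, and there is no reason to expect one: Lemma~\ref{lem:extending_Cremona} extends \emph{plane} Cremona maps to $\P^3$, which is of no help for moving a surface inside $\P^3$ to a different projective model of itself.

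The paper avoids this difficulty entirely, and the device is worth noting because it is not obvious. One cannot degenerate the ten Bordiga points in degree~$4$ to the length-$7$ scheme $p_1^3\cup p_2$ that defines $S$; the paper remarks this explicitly. Instead the paper passes to degree~$5$ and inserts a fixed line $L$: the relative linear system is $\h_t=\Lambda_t+L\subset|\o_{\p^2}(5)|$. For $t\neq 0$ one imposes six points on $L$ (forcing $L$ as a fixed component) and ten further general points, so that $\Lambda_t$ is the Bordiga system and $\varphi_{\h_t}(\p^2)=S'$. At $t=0$ six of the ten points collide to $p_1^3$ as in Construction~\ref{not:collapse}, while three more are specialised onto $L$; they are then absorbed into the fixed component, leaving $\Lambda_0\subset|\I_{p_1^3\cup p_2}(4)|$. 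A dimension count on $\mathbb G(3,7)$ then shows $\Lambda_0$ is \emph{general} in $|\I_{p_1^3\cup p_2}(4)|$, so $\varphi_{\h_0}(\p^2)=S$ on the nose and Theorem~\ref{thm:main} applies directly. The fixed line is precisely the mechanism that removes the three excess base points you were trying to dispose of by an unspecified Cremona map.
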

\begin{proof} To prove the theorem, we produce a linear system $\h$ on
  $V:=\p^2\times\Delta$, such that, with the notation of
  Lemma~\ref{lem:CEdeg6} and Theorem~\ref{thm:main}, for $t\neq 0$
  $\varphi_{\h_t}(\p^2)\cong S'$, while $\varphi_{\h_0}(\p^2)\cong
  S$.

  We already observed that a general sublinear
system of dimension $3$, $\Lambda_0\subset |\I_{p_1^3\cup p_2}(4)|$ is
such that
$$S=\varphi_{\Lambda_0}(\p^2)\subset\p^3.$$ 

First, we introduce a fixed component in the linear systems $\Lambda_0$.
Let $L\subset \p^2$ be a general line and $y_1,\ldots, y_n\in L$ general
points. Note that the linear system
$$\h_0:=\Lambda_0+L\subset|\o_{\p^2}(5)|$$
is such that
$\varphi_{\h_0}(\p^2)=\varphi_{\Lambda_0}(\p^2)=S$,  and $\h_0=|\I_{p_1^3\cup
  p_2\cup y_1\cup\ldots\cup y_h}(5)|$ for any $h\geq 6$.

We aim to obtain $\h_0$ as a specialization of linear systems in
$\p^2$ providing birational embedding of the surfaces described in Lemma~\ref{lem:CEdeg6}.

For this, keeping in  mind the Construction~\ref{not:collapse}, consider a degeneration $\pi:V\to\Delta$ together with the following sections:
\begin{itemize}
\item[-] $\{\sigma_{1},\ldots,\sigma_6\}$ such that $\sigma_i(0)=p_1$,
\item[-] $\{s_1, s_2, s_3\}$ with $s_i(0)\in L$,
\item[-] $\{y_1,\ldots, y_6\}$ with $y_i(t)\in L$, for any $t\in\Delta$,
  \item[-] $\{p_2\}$ with $p_2(t)=p_2$, for any $t\in\Delta$.
  \end{itemize}
  Let $Z=\cup\sigma_i\cup s_j \cup y_k\cup p_2$ and
  $\L=|\I_Z(5)|\subset|\o_V(5)|$ be the linear system containing $Z$.
Let
  $$\h\subset\L$$ be a general sublinear system of relative dimension
  $3$ then
  $$\h_t=\Lambda_t+L.$$
  For $t\neq 0$ the linear system  $\Lambda_t$ is contained in
  $|\I_{x_1\cup\ldots\cup x_{10}}(4)|$, with $x_i$ general points in
  $\p^2$ and, with the notation of Lemma~\ref{lem:CEdeg6},
  $\varphi_{\h_t}(\p^2)= S'$.
  For $t=0$, keeping in mind Construction~\ref{not:collapse}, we have
  $$\h_0=\Lambda_0+L\subset|\I_{p_1^3\cup p_2}(4)|+L.$$
  To conclude the proof, we need to show that, in the notation of
  Theorem~\ref{thm:main}, the special fiber is isomorphic
  to $S$. In other words  we have to
  prove that $\Lambda_0$ is a general linear space in $|\I_{p_1^3\cup p_2}(4)|$.
  Since $\dim|\I_{p_1^3\cup
    p_2}(4)|=7$, we have to produce a dominant map from the space
  parameterizing our degenerations to  ${\mathbb G}(3,7)$.
  
In the notation of Construction~\ref{not:collapse},
the restriction $\h_{Y|E}$ is given by cubics passing through $6$
general points. The linear system  $\h_{Y|E}$ is complete by  \cite[Lemma
24]{GM19}  and it is therefore the
dimension $3$  linear system of cubics
through the six points $\{\sigma_{Y,1}(0),\ldots,
\sigma_{Y,6}(0)\}$. In particular different  $6$-tuples of  points $\{\sigma_{Y,1}(0),\ldots,
\sigma_{Y,6}(0)\}$ produce different degenerations.
The linear system $\L_t$ has dimension $4$, therefore
its flat limit $\L_0$ has dimension $4$ and to any point 
$q\in\L_0^*\cong\p^4$ we may associate a sublinear system $\h_q\subset\L$
such that $[\h_{q0}]=q$. Again, changing the point produces different
limit linear systems $\h_0$.
Hence  the choice of the points $(\sigma_{Y,1}(0),\ldots,
\sigma_{Y,6}(0))\in(\p^2)^6$ and the point $q\in\p^4$  yields a generically finite map
$$\tau:(\p^2)^6\times\p^4\rat {\mathbb G}(3,7),$$
mapping the points to the limit linear system $\h_0$.
Both varieties have dimension $16$, then $\tau$
is dominant and,  for a general degeneration $\h$, we have $\varphi_{\h_0}(\p^2)=S$,
concluding the proof.
\end{proof}
\begin{remark}
  It is interesting to stress that it is not possible to provide the
  same degeneration with linear systems of relative degree $4$, indeed, there is
  no flat limit of $10$ general points to the scheme of length $7$
  given by a triple point
  and a simple point.  The trick is to force a  fixed component that absorbs the excess
  base points of the general fiber without changing the birational map.
\end{remark}

\bibliographystyle{amsalpha}
\bibliography{Biblio}

\end{document}